\newcolumntype{L}{>{$}c<{$}}
\DeclareMathOperator{\codim}{codim}
\DeclareMathOperator{\supp}{supp}
\DeclareMathOperator{\defect}{def}
\DeclareMathOperator{\proj}{pr}
\DeclareMathOperator{\GL}{GL}
\DeclareMathOperator{\rank}{rk}
\DeclareMathOperator{\noneq}{np}
\DeclareMathOperator{\dom}{dom}
\newcommand{\N}{\mathcal{N}}
\newcommand{\Nn}{\mathcal{N}} %macro for the reduction mod n of Newton strata
\numberwithin{equation}{section}
\theoremstyle{plain} %% This is the default, anyway
\newtheorem{thm}[equation]{Theorem}
\newtheorem{cor}[equation]{Corollary}
\newtheorem{lem}[equation]{Lemma}
\newtheorem{prop}[equation]{Proposition}
\theoremstyle{definition}
\theoremstyle{remark}
\newtheorem{rem}[equation]{Remark}
\newtheorem{ex}[equation]{Example}
\title{Closure relations of Newton strata in Iwahori double cosets}
\author{Stefania Trentin}
\author{Eva Viehmann}
\address{Technische Universitat M\"unchen, Fakult\"at f\"ur Mathematik - M11, Boltzmannstr. 3,
85748 Garching bei M\"unchen, Germany}
\thanks{This work was partially supported by ERC Consolidator Grant 770936:\ NewtonStrat and by a PreDoc scholarship of the Faculty of Mathematics at the Technical University of Munich.}
\begin{document}

\begin{abstract}
We consider the Newton stratification on Iwahori double cosets for a connected reductive group. We prove the existence of Newton strata whose closures cannot be expressed as a union of strata, and show how this is implied by the existence of non-equidimensional affine Deligne-Lusztig varieties. We also give an explicit example for a group of type $A_4$.
\end{abstract}

\maketitle

%\markleft{}
%\markright{}

\section{Introduction}
Let $G$ be a connected reductive group defined over $ F = \mathbb{F}_q(\!(t)\!)$ which for the purpose of this introduction is assumed to be split.

Let $\breve F = \overline{\mathbb{F}}_q(\!(t)\!)$ and  $\mathcal{O}_{\breve F}$ its ring of integers. Let $\sigma$ be the morphism on $G({\breve F})$ induced by the Frobenius automorphism of ${\breve F}$ over $F$.

For any element $b$ of $G({\breve F})$ we consider its $\sigma$-conjugacy class  \begin{equation*}
    [b] = \{g^{-1}b\sigma(g) \mid g \in G({\breve F})\},
\end{equation*}
and the Kottwitz set of $\sigma$-conjugacy classes $B(G)=\{[b]\mid b\in G(\breve F)\}$. For more details and notation we refer to  Section \ref{sec:2}.  The subdivision of $G(\breve F)$ into $\sigma$-conjugacy classes corresponds to a subdivision of the loop group $LG$ into Newton strata $\N_{[b]}$, locally closed subsets whose $\bar{\mathbb{ F}}_q$-valued points agree with the respective class $[b]$. By results of Rapoport-Richartz \cite{RR}*{Thm.~3.6} and Viehmann \cite{V13}, the closure of a Newton stratum is given by
\begin{equation}\label{eqcl1}
\overline{\mathcal N_{[b]}}=\bigcup_{[b']\leq [b]}\mathcal N_{[b']},
\end{equation}
 where $\leq$ is a partial order on $B(G)$. The index set on the right hand side is finite. 

For applications to the geometry of the special fiber of moduli spaces of shtukas, or of Shimura varieties, one wants to understand the relation between this stratification and double cosets under parahoric subgroups of $G$.

We begin by recalling the case of a hyperspecial maximal parahoric subgroup $K$ of $G$. We fix a maximal torus $T$ of $G$ and a Borel subgroup $B$ containing it. By the Cartan decomposition we have 
\begin{equation*}
     G({\breve F}) = \coprod_{\mu\in X_*(T)_{\dom}} K(\mathcal{O}_{\breve F})\mu(t)K(\mathcal{O}_{\breve F}).
\end{equation*}
Then $\N_{[b],\mu}:=\mathcal N_{[b]}\cap K(\mathcal{O}_{\breve F})\mu(t)K(\mathcal{O}_{\breve F})\neq\emptyset$ if and only if $[b]\leq [\mu(t)]$. The closure relations within $K(\mathcal{O}_{\breve F})\mu(t)K(\mathcal{O}_{\breve F})$ are as in \eqref{eqcl1}, i.e. \begin{equation}\label{eqcl2}
\overline{\mathcal N_{[b],\mu}}=\bigcup_{[b']\leq [b]}\mathcal N_{[b'],\mu}
\end{equation} whenever $\overline{\mathcal N_{[b],\mu}}$ is non-empty, compare \cite{V13}. Furthermore, $\N_{[b],\mu}$ is pure of codimension equal to the length of any maximal chain between $[b]$ and $[\mu]$ in $B(G)$.

We now describe the situation in the case we are interested in for this work. Let $I$ be an Iwahori subgroup of $G$. Then the affine Bruhat decomposition gives \begin{equation*}
     G({\breve F}) = \coprod_{x \in \widetilde{W}} I(\mathcal{O}_{\breve F})xI(\mathcal{O}_{\breve F}),
\end{equation*} where $\widetilde{W}$ denotes the extended affine Weyl group.

Again, we define Newton strata in a given double coset
by  $$\mathcal{N}_{[b], x} := [b] \cap IxI.$$ They are equipped with the structure of locally closed reduced subschemes of $IxI$. However, it turns out to be a difficult task to describe which $\N_{[b],x}$ are non-empty. For some results in this direction compare \cite{GHN} and \cite{He20}. Let $$B(G)_x:=\{[b]\in B(G)\mid \N_{[b],x}\neq\emptyset\}.$$ Contrary to the behaviour in the hyperspecial case, there are $x$ and $[b_1]\leq[b_2]\leq [b_3]\in B(G)$ with $[b_1],[b_3]\in B(G)_x$, but $[b_2]\notin B(G)_x$. In other words the subset $B(G)_x\subset B(G)$ can be non-saturated. Also, the Newton strata are in general no longer of codimension equal to the length of any maximal chain between $[b]$ and the $\sigma$-conjugacy class $[b_x]$ corresponding to the generic point of $IxI$.

Using an example of a non-equidimensional affine Deligne-Lusztig variety given in \cite{gor}*{5} together with \cite{Vimi}*{Cor.~3.11}, one sees that in general, the $\N_{[b],x}\subseteq IxI$ are no longer pure of any fixed codimension.

In this paper we study the closures of Newton strata in a given Iwahori double coset. By \eqref{eqcl1} we obtain 
\begin{equation}\label{eqcl3}
\overline{\N_{[b],x}}\subseteq \N_{\le [b], x} = \bigcup_{[b'] \le [b]} \N_{[b'], x}.
\end{equation}
 Besides this, very little was known previously. On the one hand, there are examples of double cosets where the whole pattern of closures is known (in particular for the group ${\rm SL}_3$ by \cite{B} and for so-called cordial elements by \cite{Vimi}). In all of these examples, we still have equality in \eqref{eqcl3} for all non-empty $\N_{[b],x}$. On the other hand, the difficult behaviour concerning non-emptiness and dimensions lets one suspect that this does not hold in general.
 
In Theorem \ref{lem:1}, we confirm this suspicion. We show that if there is a Newton stratum $\N_{[b],x}$ which is not pure of some codimension in $IxI$ (or equivalently an affine Deligne-Lusztig variety $X_x(b)$ which is not equidimensional), then for every maximal chain $[b]<[b_1]<\dotsm <[b_m]=[b_x]$ in $B(G)_x$ there is an $1\leq i<m$ such that the containment in \eqref{eqcl3} is strict for $[b_i]$. Also, we prove that under the same assumption there is a $[b']> [b]$ such that $\overline{\N_{[b'],x}}$ is not a union of strata. Thus in such cases, the decomposition of $IxI$ into its intersections with the various $\sigma$-conjugacy classes  is in general not a stratification (although we continue to call the subschemes $\N_{[b],x}$ Newton strata). The proof of Theorem \ref{lem:1} relies on topological strong purity, which we review in Section \ref{sec:2}.

In Section \ref{sec:4} we construct an explicit pair $([b],x)$ for $G$ of type $A_4$, such that the closure in $IxI$ of $\mathcal{N}_{[b], x}$ is not a union of strata. For this, we use the reduction method \`a la Deligne and Lusztig as proved in \cite{he} to reduce the computation of dimensions of Newton strata in $IxI$ to similar computations for cordial elements $x'$ which then follow from the general theory of cordial elements as in \cite{Vimi}, compare also Section \ref{sub:3}. Our approach is inspired  by the construction of a non-equidimensional affine Deligne-Lusztig variety for $[b]=[1]$ given by G\"ortz and He in \cite{gor}*{Sec.~5}, but adapted to find a maximal class for which the corresponding Newton stratum is not equi-dimensional. For the family of elements $x$ in the affine Weyl group that we obtain in this way, we can still give a complete description of the subset $B(G)_x$, as well as of all codimensions of irreducible components of Newton strata. 

\subsection*{}\emph{Acknowledgments.}  We thank Felix Schremmer for his help on using \verb+SageMath+. 

\section{Notation and review of previous results}\label{sec:2}
In this section we fix the notation and recall the necessary theory of Newton strata and affine Deligne-Lusztig varieties.

\subsection{Notation}\label{sec21}
Let $G$ be a connected reductive group over the local field $F \cong \mathbb{F}_q(\!(t)\!)$, where $\mathbb{F}_q$ is the finite field with $q$ elements. Let ${\breve F} \cong \overline{\mathbb{F}}_q(\!(t)\!)$ be the completion of the maximal unramified extension of $F$, and let $\sigma$ be the Frobenius automorphism of ${\breve F}$ over $F$, mapping all coefficients to their $q^{\text{th}}$ powers. We also denote the induced automorphism of $G({\breve F})$ by $\sigma$. Let $\Gamma$ be the absolute Galois group of $F$.

Fix $S$, a maximal ${\breve F}$-split torus in $G$ defined over $F$ and containing a maximal $F$-split torus. Let $T$ be the centralizer of $S$ in $G$, a maximal torus. Consider the apartment $\mathcal{A}$ of $G_{\breve F}$ associated to $S_{\breve F}$. The Frobenius $\sigma$ acts on $\mathcal{A}$ and we fix a $\sigma$-stable alcove $\mathfrak{a}$. Let $I \subset G({\breve F})$ be the Iwahori subgroup corresponding to $\mathfrak{a}$.

Let $N_T$ be the normalizer of $T$ in $G$. The (relative) Weyl group $W_0$ is defined to be $W_0 = N_T({\breve F})/T({\breve F})$, and the extended affine Weyl group is $\widetilde{W} = N_T({\breve F})/(T({\breve F})\cap I)$. Choosing a vertex of $\mathfrak{a}$ determines a section $W_0 \hookrightarrow \widetilde{W}$ of the natural projection map $\widetilde W\rightarrow W_0$. Since $T$ and $I$ are $\sigma$-stable, the Frobenius induces automorphisms of $W_0$ and $\widetilde W$, which we also denote by $\sigma$.

For every $x \in \widetilde{W}$ we choose a representative in $G(\breve F)$, which we denote again by $x$. We consider the affine Bruhat decomposition \begin{equation*}
    G({\breve F}) = \coprod_{x \in \widetilde{W}} I(\mathcal{O}_{\breve F})xI(\mathcal{O}_{\breve F}).
\end{equation*}

To define a length function on the extended affine Weyl group, we consider the decomposition $\widetilde{W} \cong \Omega \ltimes W_a$, where $W_a$ is the affine Weyl group of $G$ and $\Omega$ is the subset of elements of $\widetilde{W}$ which fix the chosen Iwahori subgroup. We extend the length function $\ell$ from $W_a$ to $\widetilde{W}$ by setting $\ell(\omega) = 0$ for $\omega \in \Omega$. %Further results about the extended affine Weyl group can be found in Richarz' paper \cite{Ri}. 

By convention, the dominant Weyl chamber is opposite to the unique chamber containing $\mathfrak{a}$. This choice determines a set of simple roots $\Delta$. Consider $\mathbb{S}$,  the set of simple reflections of $W_0$. For any $i \in \mathbb{S}$, we denote by $\alpha_i \in \Delta$ the corresponding simple root.

For any subset $J \subset \mathbb{S}$, let $W_{J}$ be the subgroup of $W_a$ generated by $J$. Denote by $^J\widetilde{W}$ the set of minimal length representatives of the cosets $ W_{J} \backslash \widetilde{W}$. For a coweight $\mu \in X_{*}(T)$ we denote by $t^{\mu} \in T({\breve F})$ the image of $t$ under $\mu$.

Any element of the extended affine Weyl group can be written in a unique way as $vt^{\mu}w$ with $\mu \in X_{*}(T)$, $v$ and $w$ in $W_0$ and and $t^{\mu}w $ in $^{\mathbb{S}}\widetilde{W}$.

\subsection{$\sigma$-conjugacy classes and Newton strata}

For $b \in G({\breve F})$ let $[b]=\{g^{-1}b\sigma(g) \mid g \in G({\breve F})\}$ denote its $\sigma$-conjugacy class, and let $B(G)=\{[b]\mid b\in G(\breve F)\}$. By \cite{Ko}, \cite{Ko''} and \cite{RR}, the elements of $B(G)$ are determined by two classifying invariants, the Newton point $\nu([b])$ and the Kottwitz point $\kappa_G([b])$. 

For any element $b \in G({\breve F})$, we have a (Newton) homomorphism $\nu(b): \mathbb{D} \rightarrow G$, where $\mathbb{D}$ is the pro-algebraic torus with character group $\mathbb{Q}$. Its $G$-conjugacy class is the first invariant of $[b]$, called the Newton point. We often identify it with its representative in $(X_*(T)_{\mathbb{Q},\dom})^{\Gamma}$.

The second invariant is given by the image under the Kottwitz map $\kappa_G: B(G) \rightarrow \pi_1(G)_{\Gamma}$, where the fundamental group $\pi_1(G)$ is defined as the quotient of $X_{*}(T)$ by the coroot lattice. The Kottwitz map is induced by a homomorphism $G({\breve F})\rightarrow \pi_1(G)_{\Gamma}$. Furthermore, it is trivial on $I$. In particular, $\kappa_G(b)=\kappa_G(x)$ for every $b\in IxI$. Thus when restricting to $IxI$, the Newton point alone is sufficient to uniquely determine a class. 

Two elements in $X_{*}(T)_{\mathbb{Q}, \dom}$ satisfy $\nu_1 \le \nu_2$ in the dominance (partial) order, if and only if their difference $\nu_2 - \nu_1$ is a linear combination of positive coroots with non-negative, rational coefficients. This induces a partial order on the set $B(G)$: we have $[b] \le [b']$ if $\kappa_G(b)=\kappa_G(b')$ and $\nu(b) \le \nu(b')$. This equips $B(G)$ with the structure of a ranked lattice. For any two classes $[b] \le [b']$, every maximal chain in $B(G)$ from $[b]$ to $[b']$ has the same length, compare \cite{chai}*{Thm.~7.4}.

For $G = \GL_n$ we have an interpretation of these notions in terms of the Newton polygon. Let $T \subset B$ be the subgroups of diagonal, respectively of upper triangular matrices. Then $I$ is the subgroup of $\GL_n(\mathcal{O}_{\breve F})$, whose image modulo $t$ is the Borel subgroup opposite to $B$. In this case $X_{*}(T)_{\mathbb{Q}} \cong \mathbb{Q}^n$  and $\nu = (\nu_i)$ is dominant, if and only if $\nu_i \ge \nu_{i+1}$ for all $i$. The Newton point $\nu$ of a class $[b]$ coincides with the classical Newton point of the isocrystal $({\breve F}^n, b \sigma)$. Let $p_{\nu}$ be the polygon associated with $\nu$, \textit{i.e.}~the graph of the continuous piecewise linear function $[0, n] \rightarrow \mathbb{R}$ mapping $0$ to $0$ and of slope $\nu_i$ on the interval $[i-1, i]$. In this case, the subset of elements of $X_{*}(T)_{\mathbb{Q}}$ actually appearing as Newton points consists of the dominant $\nu \in \mathbb{Q}^n$, whose associated Newton polygon $p_{\nu}$ has break points and end point with integral coordinates.

\subsection{Newton strata in Iwahori double cosets}\label{sec_nsi}
Our main objects of study are Newton strata in an Iwahori double coset. These are defined for $x \in \widetilde{W}$ and $[b] \in B(G)$ as \begin{equation*}
    \mathcal{N}_{[b], x} = [b] \cap IxI.
\end{equation*} 
Further, let $$B(G)_x=\{[b]\in B(G)\mid \N_{[b],x}\neq\emptyset\},$$ a finite subset of $B(G)$.

Recall that the loop group associated with $G$ is the ind-group scheme representing the functor on $\mathbb{F}_q$-algebras $R \mapsto G(R(\!(t)\!) )$. 
When non-empty, by \cite{RR}*{Thm.~3.6} a Newton stratum $\N_{[b],x}$ is the set of $\overline{\mathbb{F}_q}$-valued points of a locally closed subset of $IxI$ (and thus of $LG$), which we equip with the structure of a reduced subscheme.

Recall that a subscheme $Z$ of $LG$ is bounded if it is contained in a finite union of double cosets $IxI$. Let $I_n$ be the kernel of the projection map $I \rightarrow I(\mathcal{O}_{\breve F}/(t^n))$. Then $Z$ is called admissible if there is an $n \in \mathbb{N}$ with $ZI_n = Z$. By \cite{Vimi}*{Prop.~3.5} every Newton stratum $\N_{[b],x}$ is admissible. Since $B(G)_x$ is finite, there also is an $n \in \mathbb{N}$ with $\N_{[b],x}I_n = \N_{[b],x}$ for every $[b]\in B(G)_x$.

We can then define the {codimension} of $\mathcal{N}_{[b], x}\subset IxI$ as the codimension of its image in $IxI/I_n$. This is independent of the choice of $n$, compare \cite{Vimi}*{Rem~3.4}. Similarly, the {closure} $\overline{{\mathcal{N}}_{[b],x}}$ of a Newton stratum in $IxI$ is the preimage of its closure in the quotient $IxI/I_n$. 

For a class $[b]$, we also consider $$ \mathcal{N}_{\le [b], x}=\bigcup_{[b']\leq [b]}\N_{[b'],x}.$$ The {specialization theorem} of \cite{RR}*{Thm.~3.6} implies that this set is closed in $IxI$ and contains $\mathcal{N}_{[b], x}$ as an open subset.

Since Iwahori double cosets are irreducible, for $x \in \widetilde{W}$ we denote by $[b_x]$ the $\sigma$-conjugacy class in the generic point of $IxI$, and its Newton point $\nu_x$. By \eqref{eqcl1}, the class $[b_x]$ is the unique maximal element of $B(G)_x$.

By \cite{ham}*{Sec.~2}, the Newton stratification on  $IxI/I_n$ satisfies \emph{topological strong purity}. Considering inverse images under the projection map $IxI\rightarrow IxI/I_n$ we obtain that for any $[b] \in B(G)$, any maximal element of $\{[b'] \in B(G)\mid [b'] < [b]\}$ and every irreducible component $Z$ of $\mathcal{N}_{\le [b], x}$, the subscheme $Z_{\le [b']} = Z \cap \mathcal{N}_{\le [b'], x} \subset Z$ is either empty or pure of codimension at most $1$ in $Z$.

\subsection{Dimensions and cordial elements}\label{sub:3}
Newton strata in Iwahori double cosets are closely related to the study of affine Deligne-Lusztig varieties. These were introduced by Rapoport in \cite{rap} and are defined for $x \in \widetilde{W}$ and $b \in G({\breve F})$ as the locally closed reduced subschemes of the affine flag variety for $G$ with
\begin{equation*}
    X_x(b)(\overline{\mathbb{F}}_q) = \{ g \in G({\breve F})/I(\mathcal{O}_{\breve F}) \mid g^{-1}b\sigma(g) \in IxI\}.
\end{equation*}
By \cite{Vimi}*{Cor.~3.12}, we have 
\begin{equation}\label{eq:adlvdim}
    \dim X_x(b) = \ell(x) - \langle 2\rho, \nu(b) \rangle - \codim (\mathcal{N}_{[b], x}),
\end{equation} 
where $\rho$ denotes the half-sum of the positive roots. Moreover, $X_x(b)$ is equidimensional if and only if $\mathcal{N}_{[b], x}$ is equicodimensional in $IxI$, see \cite{Vimi}*{Cor.~3.11}.

We recall the following invariants. Let $x\in\widetilde{W}$ and $[b] \in B(G)$. 
\setlist{nolistsep}\begin{itemize}[noitemsep]
    \item[(1)] Write $x = vt^{\mu}w$ with $v,w\in W_0$, $\mu\in X_*(T)$ and such that $t^{\mu}w\in {}^{\mathbb S}\tilde W$. Then $\eta: \widetilde{W} \rightarrow W_0$ maps $x$ to $\eta(x) = \sigma^{-1}(w)v$. 
    \item[(2)] The \emph{defect} of $b$ is $\defect(b) = \rank_FG - \rank_FJ_b $, where $J_b$ is the reductive group over $F$ with $J_b(F) = \{g \in G({\breve F}) \mid gb = b \sigma(g)\}$ \cite{kot6}*{Lem. 1.9.1}.
    \item[(3)] The \emph{virtual dimension} of the pair $(x, b)$ (as in \cite{he}*{Sec. 10.1}) is \begin{equation*}
        d_x(b) = \frac{1}{2} \Big ( \ell(x) + \ell(\eta(x))  - \defect(b) - \langle 2\rho, \nu(b) \rangle \Big).
    \end{equation*} 
\end{itemize}

By \cite{he2}*{Thm. 2.30} we have \begin{equation*}
    \dim X_x(b) \le d_x(b).
\end{equation*}

Let again $[b_x]$ denote the generic class in $IxI$. If the dimension of $X_x(b_x)$ agrees with its virtual dimension, the same holds by \cite{Vimi}*{Cor.~3.17} for all classes in $B(G)_x$. An equivalent condition is $\ell (x) - \ell (\eta(x)) = \langle 2\rho, \nu_x \rangle - \defect(b_x)$, and elements $x$ satisfying this are called cordial elements. Assume that $x$ is cordial. Then by \cite{Vimi}*{Thm.~1.1}
$B(G)_x$ is saturated, i.e.~for any $[b] \le [b'] \le [b'']$, if $[b]$ and $[b'']$ are in $B(G)_x$ then also $[b'] \in B(G)_x$. Further, for any $[b] \in B(G)_x$ we have \begin{itemize}
\item $\N_{[b], x}\subseteq IxI$ is pure of codimension equal to the maximal length of a chain from $[b]$ to $[b_x]$ in $B(G)$.
\item $\overline{\N_{[b], x}}=\bigcup_{[b']\le [b]}\N_{[b'],x}$.
\end{itemize}
In other words, neither of the two phenomena we are interested in arises for cordial $x$.

\section{Non-equidimensional strata and closure relations}\label{sec:3}
In this section we show that the existence of non-equicodimensional Newton strata implies that the Newton stratification fails to be a stratification.

\begin{thm}\label{lem:1}

Let $x\in \widetilde{W}$ and assume that there is a $[b]\in B(G)_x$ such that the corresponding Newton stratum $\mathcal{N}_{[b], x}$ has irreducible components of different codimensions in $IxI$.
\begin{enumerate}
\item For any maximal chain 
\begin{equation*}
   [b] = [b_0] < [b_1] < \dots <[b_m] = [b_x]
\end{equation*}
in $B(G)_x$, there is an $ 0 < i < m$ with $\overline{\N_{[b_i], x}}\neq \bigcup_{[b']\leq [b_i]}\N_{[b'],x}$.
\item There are $[b'']>[b']\geq[b]$ such that $\N_{[b'],x}\supsetneq\overline{\N_{[b''], x}}\cap \N_{[b'],x}\neq \emptyset$.
\end{enumerate}

\end{thm}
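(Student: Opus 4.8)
The plan is to reduce everything to topological strong purity together with the specialization theorem of \cite{RR}, through one key observation, which I would isolate as a lemma: \emph{if $[c'],[c]\in B(G)_x$ are such that no class of $B(G)_x$ lies strictly between them, if $\overline{\mathcal N_{[c],x}}=\mathcal N_{\le[c],x}$, and if $\mathcal N_{[c],x}$ is pure of codimension $d$ in $IxI$, then $\mathcal N_{[c'],x}$ is pure of codimension $d+1$ in $IxI$}. To prove this, note first that the closure equality together with purity forces $\mathcal N_{\le[c],x}$ to be equidimensional of codimension $d$, its irreducible components being precisely the closures of the components of $\mathcal N_{[c],x}$. Hence the proper closed subset $\mathcal N_{<[c],x}$ contains no component of $\mathcal N_{\le[c],x}$, so all of its components have codimension $\ge d+1$; writing $\mathcal N_{<[c],x}$ as the union of the $\mathcal N_{\le[e],x}$ over the finitely many $[e]$ covered by $[c]$ in $B(G)$ and applying strong purity inside each component of $\mathcal N_{\le[c],x}$ gives the reverse bound, so $\mathcal N_{<[c],x}$ is pure of codimension $d+1$. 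Since $[c']<[c]$ gives $\mathcal N_{[c'],x}\subseteq \mathcal N_{\le[c'],x}\subseteq \mathcal N_{<[c],x}$, every component of $\mathcal N_{[c'],x}$ has codimension $\ge d+1$. For the opposite inequality, let $W$ be a component of $\mathcal N_{[c'],x}$; as $\mathcal N_{[c'],x}$ is open in $\mathcal N_{\le[c'],x}$, the closure $\overline W$ is a component of $\mathcal N_{\le[c'],x}$, and the specialization theorem together with the absence of $B(G)_x$-classes strictly between $[c']$ and $[c]$ shows that $\overline W$ is in fact a component of $\mathcal N_{<[c],x}$, hence of codimension $d+1$.

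Granting the lemma, I would prove $(1)$ as follows. First $m\ge 2$: otherwise $[b]$ is covered by $[b_x]$ in $B(G)_x$, and the lemma applied with $[c]=[b_x]$ (using $\overline{\mathcal N_{[b_x],x}}=IxI=\mathcal N_{\le[b_x],x}$ and that $\mathcal N_{[b_x],x}$ is pure of codimension $0$) would make $\mathcal N_{[b],x}$ pure, contradicting the hypothesis. Now assume for contradiction that $\overline{\mathcal N_{[b_i],x}}=\bigcup_{[b']\le[b_i]}\mathcal N_{[b'],x}=\mathcal N_{\le[b_i],x}$ for every $0<i<m$. Since $[b_m]=[b_x]$ is pure of codimension $0$ and trivially satisfies the closure relation, a downward induction along the chain, at each step applying the lemma to the covering $[b_i]<[b_{i+1}]$, shows that $\mathcal N_{[b_i],x}$ is pure of codimension $m-i$ for all $0\le i\le m$. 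For $i=0$ this contradicts the assumption that $\mathcal N_{[b],x}$ has components of different codimensions, which proves $(1)$.

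For $(2)$ I would start from a maximal element $[b']$ of the nonempty finite set $\{[c]\in B(G)_x\mid [c]\ge[b]\text{ and }\mathcal N_{[c],x}\text{ is not equicodimensional}\}$, and let $[b'']\in B(G)_x$ be a class covering $[b']$ in $B(G)_x$. By maximality $\mathcal N_{[b''],x}$ is equicodimensional of some codimension $d$; the lemma with $[c]=[b_x]$ shows $[b'']\ne[b_x]$, whence $d\ge 1$; and the lemma with $[c]=[b'']$, $[c']=[b']$ shows $\overline{\mathcal N_{[b''],x}}\subsetneq\mathcal N_{\le[b''],x}$. Now $\overline{\mathcal N_{[b''],x}}$ is equidimensional of codimension $d$ while $\mathcal N_{[b'],x}$ is not, and I would exploit this by analysing, for each irreducible component $V$ of $\mathcal N_{[b'],x}$, whether $\overline V$ lies inside the closure of a component of $\mathcal N_{[b''],x}$ -- in which case $V\subseteq\overline{\mathcal N_{[b''],x}}$ and, using the specialization theorem, $\codim(V)\ge d+1$ -- or whether $\overline V$ is itself an ``extra'' component of $\mathcal N_{\le[b''],x}$ not meeting $\mathcal N_{[b''],x}$ -- in which case $V\not\subseteq\overline{\mathcal N_{[b''],x}}$. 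The desired conclusion $\mathcal N_{[b'],x}\supsetneq\overline{\mathcal N_{[b''],x}}\cap\mathcal N_{[b'],x}\ne\emptyset$ amounts to saying that both alternatives occur.

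The main obstacle is precisely this last point in $(2)$: while showing that the intersection is \emph{proper} reduces to the codimension count just indicated, showing that it is \emph{non-empty} is delicate, because $B(G)_x$ need not be saturated, so classes of $B(G)$ can lie strictly between $[b']$ and $[b'']$ and a priori the boundary of $\mathcal N_{[b''],x}$ need not meet the stratum $\mathcal N_{[b'],x}$ at all. I expect this to require either a more careful choice of the pair $([b'],[b''])$ -- for instance, tracking a fixed component of $\mathcal N_{[b],x}$ of non-minimal codimension and letting $[b'']$ be minimal among the classes of $B(G)_x$ above $[b]$ whose stratum closure contains it -- or, in the subcase where every component of $\mathcal N_{[b'],x}$ falls into the first alternative, iterating the construction by passing to an ``extra'' component of $\mathcal N_{\le[b''],x}$ and invoking strong purity once more; this bookkeeping is the technical heart of the proof of $(2)$.
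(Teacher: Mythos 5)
Your lemma is correct, and your proof of part (1) is sound and takes essentially the same route as the paper. The paper proves (1) by building, from a fixed component $\overline{\N}_0$, a nested chain $Z_0\subseteq Z_1\subseteq\dots\subseteq Z_m=IxI$ of irreducible components of the various $\N_{\le[b_i],x}$, then invoking topological strong purity together with the maximality of the chain to force each inclusion to have codimension exactly $1$, so that every component of $\N_{[b],x}$ has codimension $m$. Your lemma (``if $[c]$ covers $[c']$ in $B(G)_x$, the closure relation holds at $[c]$, and $\N_{[c],x}$ is pure of codimension $d$, then $\N_{[c'],x}$ is pure of codimension $d+1$'') abstracts precisely that step; the interpolation through purity of $\N_{<[c],x}$ and the argument that $\overline W$ is a component of $\N_{<[c],x}$ are a valid way to organize the same count, including the use of maximality in $B(G)_x$ at the exact point the paper uses it. The base case $m\ge 2$ and the downward induction are also fine.

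For part (2) there is a genuine gap, which you yourself flag. You fix a maximal $[b']\ge[b]$ with $\N_{[b'],x}$ non-equicodimensional and \emph{one} class $[b'']$ covering $[b']$ in $B(G)_x$; you can then show $\overline{\N_{[b''],x}}\subsetneq\N_{\le[b''],x}$, but nothing in your setup forces $\overline{\N_{[b''],x}}\cap\N_{[b'],x}$ to be simultaneously non-empty and proper for that particular $[b'']$ — indeed a fixed covering class need not work, since for a given component $V$ of $\N_{[b'],x}$ the closure $\overline V$ may ``jump over'' $[b'']$ to some other class. The paper's argument, which is essentially your suggested fix (a), chooses $[b'']$ \emph{depending on a component}: after normalizing $[b']$ so that $\N_{[c],x}$ is equicodimensional for every $[c]>[b']$, take any component $\N_0$ of $\N_{[b'],x}$ with closure $Z_0$, and let $[\tilde b]\in B(G)_x$, $[\tilde b]>[b']$, be minimal with $Z_0\subseteq\overline{\N_{[\tilde b],x}}$ (this exists, as $[b_x]$ qualifies). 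Strong purity plus this minimality forces $Z_0$ to have codimension exactly $1$ in any component of $\overline{\N_{[\tilde b],x}}$ containing it, and since $\N_{[\tilde b],x}$ is pure of some codimension $\tilde n$ by the normalization, $\N_0$ has codimension $\tilde n+1$. Now if (2) failed for the pair $([b'],[b''])$ for \emph{every} $[b'']>[b']$, then for every such $[b'']$ the closure $\overline{\N_{[b''],x}}$ would either contain all of $\N_{[b'],x}$ or miss it entirely; consequently the set $\{[b'']>[b'] : Z_0\subseteq\overline{\N_{[b''],x}}\}$, hence the chosen $[\tilde b]$ and the resulting codimension $\tilde n+1$, would be the same for every component $\N_0$, making $\N_{[b'],x}$ equicodimensional — a contradiction. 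This is exactly the ``bookkeeping'' you left open; filling it in along these lines completes the proof.
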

\begin{rem}
We do not know if every maximal chain in $B(G)_x$ has the same length.

In \cite{gor}*{5.2}, G\"ortz and He construct a non-equidimensional affine Deligne-Lusztig variety (for $G$ of type $A_3$ and $b=1$). By (\ref{eq:adlvdim}), the irreducible components of the corresponding Newton stratum  do not all have the same codimension. Then Theorem \ref{lem:1} implies that in this case, the Newton stratification is no longer a stratification. For another such example compare Section \ref{sec:4} below.
\end{rem}

\begin{proof}[Proof of Theorem \ref{lem:1}]
Let $\N_0$ be an irreducible component of $\mathcal{N}_{[b],x}$ and let $n_0$ be its codimension in $IxI$. 

Let $n$ be such that all Newton strata in $IxI$ are invariant under $I_n$. Recall that $\mathcal{N}_{[b],x}/I_n$ has finitely many irreducible components, as it is a locally closed subscheme of the Noetherian scheme $IxI/I_n$, and hence is itself Noetherian. In particular, the closure of $\N_0$ in $IxI$ is an irreducible component of the closure of $\N_{[b],x}$ and of the same codimension $n_0$ in $IxI$. 

To prove (1) we assume that there is a maximal chain of classes in $B(G)_x$ \begin{equation}\label{eq:1b}
    [b] = [b_0] < [b_1] < \dots <[b_m] = [b_x],
\end{equation}
such that $\overline{\N_{[b_i], x}}= \bigcup_{[b']\leq [b_i]}\N_{[b'],x}$ for every $i>0$. Inductively, one can then construct a chain of closed irreducible subschemes of $IxI$ \begin{equation}\label{eq:2}
    \overline{\N}_0 = Z_0 \subseteq Z_1 \dots \subseteq Z_m = IxI,
\end{equation} such that each $Z_i$ is an irreducible component of $\N_{\le [b_i],x}$ with generic $\sigma$-conjugacy class $[b_i]$. 

 From the topological strong purity of the Newton stratification (compare the end of Section \ref{sec_nsi}) and the maximality of (\ref{eq:1b}), it follows that $Z_{i-1}$ has codimension at most $1$ in $Z_{i}$. Since the generic $\sigma$-conjugacy classes of $Z_{i-1}$ and $Z_i$ are not equal, the two closed irreducible subschemes are not equal, and hence the codimension is equal to 1. Altogether, the codimension $n_0$ of $\Nn_0$ is equal to $m$, which is independent of the choice of the irreducible component $\Nn_0$. Thus each irreducible component of $\N_{[b],x}$ has the same codimension.
 
We now prove (2). Possibly replacing $[b]$ by a larger class, we may assume that all $\N_{[b'],x}$ with $[b]<[b']$ are equicodimensional. Let $\Nn_0$ be an irreducible component of $\N_{[b],x}$, and let $Z_0$ be its closure in $IxI$. By the same argument as above we see that $Z_0$ is an irreducible component of $\overline{\N_{[b],x}}$. Let $[\tilde b] >[b]$ be such that $ Z_0\subset \overline{\mathcal{N}_{[\tilde b], x}}$. We choose $[\tilde b]$ minimal in $B(G)_x$ with respect to this property, meaning that $Z_0$ is not contained in $\overline{\mathcal{N}_{[b''], x}}$ for any $[b] < [b''] < [\tilde b]$. By the Purity Theorem together with the minimality of $[\tilde b]$, we obtain that $Z_0$ has codimension $1$ in any irreducible component of $\overline{\mathcal{N}_{[\tilde b], x}}$ containing it. By the maximality of $[b]$ we obtain that $\N_{[\tilde b],x}$ is pure of some codimension $\tilde n$ in $IxI$, hence $\Nn_0$ has codimension $\tilde n+1$ in $IxI$.

Assume that the assertion of (2) does not hold for $[b']=[b]$, that is for every $[b'']>[b]$, the closure of $\N_{[b''],x}$ either contains $\N_{[b],x}$ or has empty intersection with it. Then $[\tilde b]$ can be chosen to be the same for every irreducible component $\Nn_0$ of $\N_{[b],x}$, which implies that $\N_{[b],x}$ is pure of codimension $\tilde n+1$, contradiction.
\end{proof}

\begin{cor}\label{cor:max}
Let $[b] \in B(G)_x$ such that $[b]$ is maximal in $B(G)_x \setminus [b_x]$. Then $X_x(b)$ is equidimensional of dimension $\ell(x) - \langle 2\rho, \nu(b) \rangle -1$.
\end{cor}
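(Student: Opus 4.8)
The plan is to apply Theorem~\ref{lem:1} in its contrapositive form. The key observation is that $[b]$ being maximal in $B(G)_x\setminus[b_x]$ means that every maximal chain from $[b]$ to $[b_x]$ in $B(G)_x$ has length exactly $m=1$, namely $[b]=[b_0]<[b_1]=[b_x]$. In this degenerate situation, there is no index $i$ with $0<i<m$, so the conclusion of part~(1) of Theorem~\ref{lem:1} cannot hold. Hence the hypothesis of Theorem~\ref{lem:1} must fail, i.e.\ $\mathcal N_{[b],x}$ has all irreducible components of the same codimension in $IxI$. By \cite{Vimi}*{Cor.~3.11}, equivalently $X_x(b)$ is equidimensional.

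Next I would pin down the actual value of this codimension. Since $X_x(b)$ is now known to be equidimensional, so is $\mathcal N_{[b],x}$, of some codimension $n_0$; and running the argument of part~(1) of the proof of Theorem~\ref{lem:1} along the length-one chain $[b]<[b_x]$, topological strong purity of the Newton stratification (applied with $[b']=[b]$ being the unique maximal element below $[b_x]$) forces every irreducible component $Z_0$ of $\overline{\mathcal N_{[b],x}}$ to have codimension at most $1$ in the irreducible scheme $IxI$, hence exactly $1$ since the generic $\sigma$-conjugacy class of $IxI$ is $[b_x]\neq[b]$. Thus $n_0=1$, i.e.\ $\codim(\mathcal N_{[b],x})=1$.

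Finally, I would substitute into the dimension formula \eqref{eq:adlvdim}: $\dim X_x(b)=\ell(x)-\langle 2\rho,\nu(b)\rangle-\codim(\mathcal N_{[b],x})=\ell(x)-\langle 2\rho,\nu(b)\rangle-1$, which is exactly the claimed value. The main subtlety — though it is a mild one here — is making sure the degenerate chain length $m=1$ is genuinely handled: one must check that maximality of $[b]$ in $B(G)_x\setminus[b_x]$ really does preclude any intermediate class in $B(G)_x$ (it does, by definition of maximality, and since $[b_x]$ is the unique maximal element of $B(G)_x$ by \eqref{eqcl1} every chain from $[b]$ upward in $B(G)_x$ must terminate at $[b_x]$), and that the topological-purity step legitimately applies to the one-step chain. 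No genuinely new argument beyond Theorem~\ref{lem:1} and \eqref{eq:adlvdim} is needed; the corollary is essentially the $m=1$ boundary case of the theorem read positively.
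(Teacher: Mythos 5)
Your proposal is correct and takes essentially the same route as the paper: both reduce to the length-one maximal chain $[b]<[b_x]$ in $B(G)_x$ and run the purity argument from the proof of Theorem~\ref{lem:1}(1) to get that $\mathcal N_{[b],x}$ is pure of codimension~$1$ in $IxI$, then invoke \eqref{eq:adlvdim}. Your preliminary contrapositive step (using Theorem~\ref{lem:1}(1) vacuously to get equidimensionality before pinning down the codimension) is logically fine but redundant, since the purity argument you then run already yields both equicodimensionality and the value~$1$ in one stroke, exactly as the paper does.
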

\begin{proof}
Since $\N_{[b_x],x}$ is dense in $IxI$, its closure contains $\N_{[b],x}$. By our assumption $[b] < [b_x]$ is a maximal chain in $B(G)_x$, and for $[b_x]$, equality holds in \eqref{eqcl3}. Thus by the proof of Theorem \ref{lem:1} (1), $\N_{[b], x}$ is pure of codimension 1 in $IxI$. By (\ref{eq:adlvdim}), this is equivalent to $X_x(b)$ being equidimensional of the claimed dimension.
\end{proof}

\section{An explicit example}\label{sec:4}

In the previous section we related the existence of Newton strata whose closures are not a union of strata to the existence of non-equidimensional strata, and used this to show that the Newton stratification is in general not a stratification in the proper sense. In this section we work out an explicit example of a Newton stratum in some $IxI$ whose closure is not a union of strata, generalizing the method of \cite{gor}*{5.2}. Despite the failure to be a stratification, we can in this example determine which Newton strata in $IxI$ are non-empty and compute all codimensions of irreducible components. We expect that this serves as a guiding example of typical behaviour of the Newton stratification also in other, more general cases.

\subsection{A family of pairs $(x,s)$ in $ W_a \times \mathbb{S}$}
%As we have recalled in Section \ref{sec:2}, a Newton stratum $\mathcal{N}_{[b], x}$ has irreducible components of different codimensions if and only if the corresponding  affine Deligne-Lusztig variety $X_x(b)$ is not equidimensional.

Our goal is to find an $x$ and an $\N_{[b],x}$ having irreducible components of different codimensions. For this we generalize the method of \cite{gor}*{5}, where an $x\in W_a$ is constructed such that the variety $X_x(1)$ is not equidimensional. However, we consider larger $[b]$, and rather want to minimize the length of maximal chains between $[b]$ and $[b_x]$, to make the closure relations easier to handle. Also, we do not restrict ourselves to split groups as in loc.~cit.

\begin{lem}\label{lem:neq}
Let $x = vt^{\mu}w\in \tilde W$ with $v,w\in W_0$ and $t^{\mu}w\in {}^{\mathbb S}\widetilde W$, and let $s \in \mathbb{S}$ satisfying the following conditions:
\begin{enumerate}
	\item $sx\sigma(s)$ lies in the shrunken Weyl chamber
    \item $\ell(sv) < \ell(v)$ and $\ell(w\sigma(s)) > \ell(w)$
    \item $\ell(\sigma^{-1}(w)sv) < \ell(\sigma^{-1}(w)v)- 1$
    \item $\supp_{\sigma}(\sigma^{-1}(w)v) = \supp_{\sigma}(\sigma^{-1}(w)sv) = \mathbb{S}$ where $\supp_{\sigma}(u)=\bigcup_{i}\sigma^i(\supp(u))$ for any $u\in W_0$.
\end{enumerate}

Then $\ell(sx\sigma(s)) = \ell(x) -2$, and $sx$ and $x$ also lie in the shrunken Weyl chamber. Let $[b]\in B(G)_x$. Then
\begin{enumerate}
    \item[(i)]  $\dim X_x(b) = \max\{ \dim X_{sx}(b), ~ \dim X_{sx\sigma(s)}(b)\} + 1$. 
    \item[(ii)] If $\dim X_{sx\sigma(s)}(b)> \dim X_{sx}(b)$ and both are non-empty, then $X_x(b)$ has irreducible components of dimension $\dim X_{sx\sigma(s)}(b)+1$ and $\dim X_{sx}(b)+1$. In particular, it is not equidimensional.%\label{eq:dim}
    \item[(iii)] $ d_{sx\sigma(s)}(b) > d_{sx}(b).$%\label{eq:vir}
\end{enumerate}
\end{lem}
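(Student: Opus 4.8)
The statement has several parts: the length computation $\ell(sx\sigma(s)) = \ell(x)-2$ together with the claims that $sx$, $x$ lie in the shrunken Weyl chamber, and then the three numbered conclusions (i), (ii), (iii). The final (and presumably the requested) statement is (iii): $d_{sx\sigma(s)}(b) > d_{sx}(b)$. Since the virtual dimension is
\[
d_y(b) = \tfrac12\bigl(\ell(y) + \ell(\eta(y)) - \defect(b) - \langle 2\rho, \nu(b)\rangle\bigr),
\]
the terms $\defect(b)$ and $\langle 2\rho,\nu(b)\rangle$ are common to both $d_{sx\sigma(s)}(b)$ and $d_{sx}(b)$, so the inequality (iii) is equivalent to
\[
\ell(sx\sigma(s)) + \ell(\eta(sx\sigma(s))) > \ell(sx) + \ell(\eta(sx)).
\]
Thus the whole thing reduces to a purely combinatorial statement in $\widetilde W$ and $W_0$: I just need to compare these two sums of lengths. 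The plan is to compute all four quantities in terms of $\ell(x)$, $\ell(v)$, $\ell(w)$ and the lengths of $\sigma^{-1}(w)v$ and $\sigma^{-1}(w)sv$, using hypotheses (1)–(4).

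First I would pin down the lengths of the group elements. By hypothesis (2), $\ell(sv) = \ell(v) - 1$ and $\ell(w\sigma(s)) = \ell(w)+1$. Using the fact that for $x = vt^\mu w$ in the shrunken chamber with $t^\mu w \in {}^{\mathbb S}\widetilde W$ one has the standard length formula $\ell(x) = \ell(v) + \ell(t^\mu w)$ (and $\ell(t^\mu w) = \ell(t^\mu) - \ell(w)$ type bookkeeping — I'd cite the relevant length formula, e.g. from He–Nie or He's survey, since left-multiplying by $v$ and right by $w$ with these shrunken/minimal conditions behaves additively), I get $\ell(sx) = \ell(x) - 1$. For $sx\sigma(s)$: writing $sx\sigma(s) = (sv)t^\mu(w\sigma(s))$, I need that this is still of the form $v' t^{\mu} w'$ with $t^\mu w' \in {}^{\mathbb S}\widetilde W$ — here hypothesis (1), that $sx\sigma(s)$ is in the shrunken Weyl chamber, is what guarantees the length stays additive and $sx\sigma(s)$ has the advertised normal form. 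Hence $\ell(sx\sigma(s)) = \ell(sv) + \ell(t^\mu) - \ell(w\sigma(s)) = (\ell(v)-1) + \ell(t^\mu) - (\ell(w)+1) = \ell(x) - 2$. I'd also record that $x$ and $sx$ being in the shrunken chamber follows because $x$ is obtained from $sx\sigma(s)$ by moving a simple reflection from each side into the interior, which cannot push one out of the shrunken region once $sx\sigma(s)$ is already inside — this is the standard observation that the shrunken chamber is "upward closed" under such operations in the relevant sense; I would state it carefully or quote the precise statement from the cordial-elements literature.

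Next I would compute the $\eta$-values. Recall $\eta(vt^\mu w) = \sigma^{-1}(w)v$. So $\eta(x) = \sigma^{-1}(w)v$, and from the normal forms above, $\eta(sx) = \sigma^{-1}(w)(sv) = \sigma^{-1}(w)sv$, while $\eta(sx\sigma(s)) = \sigma^{-1}(w\sigma(s))(sv) = \sigma^{-1}(w)\,\sigma^{-1}(\sigma(s))\,sv = \sigma^{-1}(w)\,s\,sv = \sigma^{-1}(w)v = \eta(x)$. (The key cancellation $\sigma^{-1}(\sigma(s)) = s$ and $s\cdot sv = v$ is what makes $\eta(sx\sigma(s)) = \eta(x)$.) Therefore
\[
\ell(sx\sigma(s)) + \ell(\eta(sx\sigma(s))) = \bigl(\ell(x)-2\bigr) + \ell(\sigma^{-1}(w)v),
\]
\[
\ell(sx) + \ell(\eta(sx)) = \bigl(\ell(x)-1\bigr) + \ell(\sigma^{-1}(w)sv).
\]
Subtracting, (iii) becomes $\ell(\sigma^{-1}(w)v) - \ell(\sigma^{-1}(w)sv) > 1$, which is precisely hypothesis (3): $\ell(\sigma^{-1}(w)sv) < \ell(\sigma^{-1}(w)v) - 1$. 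This proves (iii). Hypothesis (4) is not needed for (iii) itself; it enters (i) and (ii) via the reduction method of \cite{he} (it ensures $B(G)_{sx} = B(G)_x$ etc., so that the dimension recursion $\dim X_x(b) = \max\{\dim X_{sx}(b), \dim X_{sx\sigma(s)}(b)\}+1$ applies with no degenerate cases) — I would handle (i) by citing the Deligne–Lusztig-type reduction (e.g. \cite{he}, \cite{he2}) in the case $\ell(sx\sigma(s)) = \ell(x)-2$, and (ii) by combining (i) with (iii) via \eqref{eq:adlvdim} and the fact that the two contributing ADLVs sit inside $X_x(b)$ as images of the relevant fibrations, so their closures give components of the stated dimensions.

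The main obstacle I anticipate is \emph{not} the inequality (iii) — that is, as shown above, a short exact computation once the normal forms are established — but rather verifying rigorously that $sx\sigma(s) = (sv)t^\mu(w\sigma(s))$ is genuinely in normal form, i.e. that $t^\mu(w\sigma(s)) \in {}^{\mathbb S}\widetilde W$ and that the length function is additive across the decomposition, and likewise the claim that $x$ and $sx$ lie in the shrunken chamber. This is where hypotheses (1) and (2) have to be used with care and where one must invoke the precise length formulas for elements of $\widetilde W$ written as $vt^\mu w$ with $t^\mu w$ of minimal length in its coset (as developed in the work on cordial elements and in He's length-formula papers). Once that structural bookkeeping is in place, parts (i)–(iii) follow mechanically from the reduction method and the formula \eqref{eq:adlvdim}.
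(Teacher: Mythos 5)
Your treatment of the length identity $\ell(sx\sigma(s)) = \ell(x)-2$, the shrunken-chamber claims, and parts (i)–(ii) essentially matches the paper (both invoke the Deligne–Lusztig reduction method of \cite{he}). The interesting divergence is in (iii). The paper proves $d_{sx\sigma(s)}(b) > d_{sx}(b)$ \emph{indirectly}: it takes the basic class $b_0$, uses condition (4) and the shrunken-chamber hypothesis together with \cite{he}*{Cor.~12.2} to conclude that $X_{x}(b_0)$, $X_{sx}(b_0)$, $X_{sx\sigma(s)}(b_0)$ are all non-empty of dimension equal to the respective virtual dimensions, then invokes \cite{gor}*{Lem.~3.2.2} (i.e.\ condition (3)) to get $d_x(b_0) > d_{sx}(b_0)+1$, and finally deduces $d_{sx\sigma(s)}(b_0) > d_{sx}(b_0)$ from the reduction method; the constancy of $d_{sx\sigma(s)}(b)-d_{sx}(b)$ in $b$ then gives (iii) for all $b$. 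You instead compute the two $\eta$-values directly — the key observation being the cancellation $\eta(sx\sigma(s)) = \sigma^{-1}(w\sigma(s))\cdot sv = \sigma^{-1}(w)v = \eta(x)$ while $\eta(sx) = \sigma^{-1}(w)sv$ — which turns (iii) into the inequality $\ell(\sigma^{-1}(w)v) - \ell(\sigma^{-1}(w)sv) > 1$, i.e.\ literally condition (3). This is shorter and more elementary: it dispenses with the non-emptiness and dimension-equals-virtual-dimension input from \cite{he}*{Cor.~12.2}, and correctly shows that condition (4) plays no role in (iii). (It is essentially the computation inside \cite{gor}*{Lem.~3.2.2}, applied to the pair $(sx\sigma(s),sx)$ rather than $(x,sx)$.) Two remarks. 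First, both routes rest on the same normal-form fact, namely that $sx\sigma(s) = (sv)t^\mu(w\sigma(s))$ with $t^\mu(w\sigma(s))\in {}^{\mathbb{S}}\widetilde{W}$ — you rightly flag this as the place where care is needed; it does hold under the shrunken hypothesis (which forces $\mu$ sufficiently regular), and the paper uses the same fact implicitly when asserting $\ell(sx\sigma(s))=\ell(x)-2$. Second, a small loss from your more efficient argument: the paper's detour through $b_0$ also establishes, as a by-product, that the basic Newton stratum in $IsxI$ is non-empty, and this is cited later in the proof of Proposition~\ref{lem:bgne}. With your proof of (iii) one would need to note separately that \cite{he}*{Cor.~12.2} (applied via conditions (1) and (4)) still supplies that non-emptiness.
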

\begin{proof}
Condition (2) implies that $\ell(sx\sigma(s)) = \ell(x) -2$, and this together with (1) shows that $sx$ and $x$ are also in the shrunken Weyl chamber. 

By the Deligne-Lusztig reduction method (compare \cite{he}*{Prop.~4.2}), $X_x(b)$ can be written as a disjoint union $X_x(b) = X_1 \sqcup X_2$ of a closed subscheme $X_1$ and an open subscheme $X_2$, where $X_1$ is of relative dimension one over $X_{sx\sigma(s)}(b)$ and $X_2$ is of relative dimension one over $X_{sx}(b)$. In particular, this proves (i) and (ii).

Let $b_0$ be the unique basic element with $\kappa_G(b_0)=\kappa_G(x)$. Using that $x,sx,sx\sigma(s)$ are shrunken and Condition (4), \cite{he}*{Cor.~12.2} implies that $X_x(b_0)$, $X_{sx}(b_0)$ and $X_{sx\sigma(s)}(b_0)$ are non-empty and that their dimensions agree with the respective virtual dimensions $d_x(b_0)$, $d_{sx}(b_0)$ and $d_{sx\sigma(s)}(b_0)$. Note that loc.~cit.~only considers the case where $\sigma$ acts trivially on $W_0$. For the generalization to the present case compare the remarks following \cite{HeNote}*{Thm.~5.3}. Furthermore, (3) and \cite{gor}*{Lem.~3.2.2} imply that $d_x(b_0) > d_{sx}(b_0) +1 $. Again, loc.~cit.~only considers the split case. However, the proof carries over to our situation if one makes the obvious adaptations due to the action of $\sigma$ on $W_0$. From these observations and the reduction method we can deduce \begin{equation*}
   \dim X_2 = \dim X_{sx}(b_0) +1 = d_{sx}(b_0) + 1 < d_x(b_0) = \dim X_x(b_0).
\end{equation*}
By (i), this also shows that $d_{sx\sigma(s)}(b_0) > d_{sx}(b_0)$. For any $[b]\in B(G)$ we have \begin{equation*}
    d_{sx\sigma(s)}(b) - d_{sx}(b) = \frac{1}{2} \Big( \ell(sx\sigma(s))+ \ell(\eta(sx\sigma(s))) - \ell(sx) - \ell(\eta(sx))\Big),
\end{equation*}
which is a constant independent of the class $[b]$, and thus is positive.
\end{proof}

We now focus on a case where all dimensions of irreducible components of $X_{sx}(b)$ and $X_{sx\sigma(s)}(b)$ can be computed. In the next section we show that these conditions can indeed be satisfied.

\begin{prop}\label{lem:bgne}
Let $(x,s)$ be as in Lemma \ref{lem:neq} and such that both $sx$ and $sx\sigma(s)$ are cordial. Let $[b_{sx}]$ and $[b_{sx\sigma(s)}]$ be the generic classes of $IsxI$ and $Isx\sigma(s)I$. Then 
\begin{equation}\label{eq:bg}
     B(G)_ x = B(G)_{sx} \cup B(G)_{sx\sigma(s)} = \{[b] \in B(G) \mid [b] \le [b_{sx}] \text{ or } [b] \le [b_{sx\sigma(s)}]\},
\end{equation} and $[b_x] = \max\{[b_{sx}], [b_{sx\sigma(s)}]\}$.
Denote by $B(G)^{\noneq}_x$ the set of classes of $B(G)_x$ whose Newton strata in $IxI$ have irreducible components of different codimensions. Then
\begin{equation}\label{eq:ne}
    B(G)^{\noneq}_x = B(G)_{sx} \cap B(G)_{sx\sigma(s)} = \{[b] \in B(G) \mid[b] \le [b_{sx}] \text{ and } [b] \le [b_{sx\sigma(s)}]\}.
\end{equation} Moreover, for any $b \in B(G)^{\noneq}_x$ the variety $X_x(b)$ has dimension \begin{equation*}
   \dim X_x(b) =  d_{sx\sigma(s)}(b) +1.
\end{equation*}
\end{prop}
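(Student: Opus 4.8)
The whole argument rests on the Deligne--Lusztig reduction already used in the proof of Lemma~\ref{lem:neq}: for every $[b]\in B(G)$ one has a decomposition $X_x(b) = X_1 \sqcup X_2$ with $X_1$ closed and a fibration of relative dimension one over $X_{sx\sigma(s)}(b)$, and $X_2$ open and a fibration of relative dimension one over $X_{sx}(b)$. The plan is to read off each assertion of the proposition from this, feeding in the cordiality of $sx$ and $sx\sigma(s)$ together with the inequality of Lemma~\ref{lem:neq}(iii).

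First I would identify $B(G)_x$. Since $X_1$ (resp.\ $X_2$) is non-empty exactly when its base is, the reduction gives at once $B(G)_x = B(G)_{sx} \cup B(G)_{sx\sigma(s)}$. Because $sx$ is cordial, $B(G)_{sx}$ is saturated with unique maximum $[b_{sx}]$, and the basic class $[b_0]$ with $\kappa_G(b_0) = \kappa_G(x)$ lies in $B(G)_{sx}$ (this is already used in the proof of Lemma~\ref{lem:neq}, $sx$ being shrunken, via \cite{he}*{Cor.~12.2}); moreover $[b_0]$ is the minimum of the fibre $\kappa_G^{-1}(\kappa_G(x)) \subseteq B(G)$. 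Hence any $[b] \le [b_{sx}]$ satisfies $[b_0] \le [b] \le [b_{sx}]$ with both endpoints in $B(G)_{sx}$, so saturation forces $[b] \in B(G)_{sx}$; this gives $B(G)_{sx} = \{[b] \mid [b] \le [b_{sx}]\}$, and likewise for $sx\sigma(s)$, which is \eqref{eq:bg}. For $[b_x] = \max\{[b_{sx}], [b_{sx\sigma(s)}]\}$, note that $[b_{sx}], [b_{sx\sigma(s)}] \le [b_x] = \max B(G)_x$, while $[b_x]$ lies in $B(G)_{sx}$ or in $B(G)_{sx\sigma(s)}$; say the first, so that $[b_x] \le [b_{sx}]$, hence $[b_x] = [b_{sx}] \ge [b_{sx\sigma(s)}]$.

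Next I would determine $B(G)^{\noneq}_x$ and the dimension formula. Fix $[b]\in B(G)_x$. If $[b]$ lies in exactly one of $B(G)_{sx}$, $B(G)_{sx\sigma(s)}$, then one of $X_1$, $X_2$ is empty and $X_x(b)$ equals the other, a relative-dimension-one fibration over $X_{sx}(b)$ resp.\ $X_{sx\sigma(s)}(b)$, which is equidimensional because $sx$ resp.\ $sx\sigma(s)$ is cordial; thus $X_x(b)$ is equidimensional, so by \cite{Vimi}*{Cor.~3.11} the stratum $\N_{[b],x}$ is equicodimensional and $[b] \notin B(G)^{\noneq}_x$. If instead $[b] \in B(G)_{sx} \cap B(G)_{sx\sigma(s)}$, then $X_{sx}(b)$ and $X_{sx\sigma(s)}(b)$ are both non-empty; cordiality of $sx$ and $sx\sigma(s)$ with \cite{Vimi}*{Cor.~3.17} gives $\dim X_{sx}(b) = d_{sx}(b)$ and $\dim X_{sx\sigma(s)}(b) = d_{sx\sigma(s)}(b)$, and the latter exceeds the former by Lemma~\ref{lem:neq}(iii); so Lemma~\ref{lem:neq}(ii) applies and $X_x(b)$ is not equidimensional, whence $\N_{[b],x}$ has irreducible components of different codimensions (again by \cite{Vimi}*{Cor.~3.11}) and $[b] \in B(G)^{\noneq}_x$. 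This yields $B(G)^{\noneq}_x = B(G)_{sx} \cap B(G)_{sx\sigma(s)}$, which equals the right-hand side of \eqref{eq:ne} by the previous paragraph, and for such $[b]$ Lemma~\ref{lem:neq}(i) together with the two dimension identities gives $\dim X_x(b) = \max\{d_{sx}(b), d_{sx\sigma(s)}(b)\} + 1 = d_{sx\sigma(s)}(b) + 1$.

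I expect the main obstacle to be the step in the second paragraph that upgrades ``$B(G)_{sx}$ is saturated'' --- all that cordiality supplies directly --- to ``$B(G)_{sx}$ is the full down-set of $[b_{sx}]$'': this needs both that the basic class with Kottwitz point $\kappa_G(x)$ occurs in $B(G)_{sx}$ and that it is the minimum of its Kottwitz fibre in $B(G)$. A minor technical point is the claim used above that the fibrations in the reduction method preserve equidimensionality, so that the ``only one side non-empty'' case really gives an equidimensional $X_x(b)$; this is immediate from the explicit structure of the reduction in \cite{he}*{Prop.~4.2}, whose fibres are irreducible of dimension one. Everything else is bookkeeping with Lemma~\ref{lem:neq} and the properties of cordial elements recalled in Section~\ref{sub:3}.
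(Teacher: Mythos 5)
Your proposal is correct and follows essentially the same route as the paper's proof: apply the Deligne--Lusztig reduction to reduce non-emptiness, (non-)equidimensionality and dimension of $X_x(b)$ to the same questions for $sx$ and $sx\sigma(s)$, then invoke cordiality (equidimensionality of the pieces, saturation of $B(G)_{sx}$, $B(G)_{sx\sigma(s)}$) and Lemma~\ref{lem:neq}(i)--(iii). You spell out a couple of points the paper compresses (that the basic class is the minimum of its Kottwitz fibre, and that the one-dimensional fibrations in the reduction preserve equidimensionality), but the structure and key inputs are identical.
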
  
\begin{proof}
By the reduction method of \cite{he}*{Prop.~4.2}, $X_x(b)$ is non-empty if and only if $X_{sx}(b)$ or $X_{sx\sigma(s)}(b)$ is non-empty. 

Since $sx$ and $sx\sigma(s)$ are cordial, all non-empty affine Deligne-Lusztig varieties for $sx$ or $sx\sigma(s)$ are equidimensional of the respective virtual dimension. Thus by Lemma \ref{lem:neq}(ii) and (iii), $X_x(b)$ is non-equidimensional if and only if $X_{sx}(b)$ and $X_{sx\sigma(s)}(b)$ are non-empty. 

In the proof of Lemma \ref{lem:neq}(iii) we saw that the basic Newton stratum in $IsxI$ is non-empty. By \cite{Vimi}*{Thm.~1.1 and Rmk. 3.18} $B(G)_{sx}$ is saturated and hence equal to the set of all classes $[b]\leq [b_{sx}]$, and similarly for  $sx\sigma(s)$. 

The statement on the dimension of $X_x(b)$ directly follows from Lemma \ref{lem:neq} (i) and (iii) and cordiality of $sx$ and $sx\sigma(s)$.
\end{proof}

\subsection{An explicit example}
In order to find a pair $(x,s)$ as in Proposition \ref{lem:bgne}, we use the mathematics software system 
\verb+SageMath+\footnote{{www.sagemath.org}}. Here, to compute generic Newton points and to check cordiality we use the results of \cite{Mi}*{Thm.~3.2} and \cite{Vimi}*{Prop.~4.2}, respectively. The results of \cite{Mi} offer a very useful description of the generic Newton point for split groups $G$ and any $x \in \widetilde{W}$ of the form $x=vt^{\mu}w$ where $\langle\alpha,\mu\rangle>M$ for all simple roots $\alpha$ where $M$ is a constant depending on $G$, $v$ and $w$. Under the same assumption, \cite{Vimi} gives a criterion to check if $x$ is cordial in terms of some paths in the quantum Bruhat graph. We include the \verb+SageMath+ code in the appendix.

For $G$ of type $A_3$, an exhaustive computer search (on the finitely many possible triples $(v,w,s)$) shows that there are no pairs $(x, s)$ as in Proposition \ref{lem:bgne}. However, for type $A_4$ there are many such pairs. The following example is a particularly suitable one in the sense that the two classes $[b_{sx}]$ and $[b_{sxs}]$ (which correspond to the maximal non-empty and the maximal non-equicodimensional Newton stratum in $IxI$) are close to each other in $B(G)$.

\begin{ex}\label{ex:1}
Let $G$ be of type $A_4$, $x = vt^{\mu}w$ in the affine Weyl group with
\begin{equation*}
    v = s_4s_2s_3s_1, \quad w = s_1s_2s_3s_4s_2s_3s_1,  \quad \mu = (150, 75, 0 -75, -150), \quad s = s_2
\end{equation*} where $s_i$ denote the simple reflections in $\mathbb{S}$. In this example, the constant $M$ for the regularity assumption on $\mu$ equals 74 and is thus satisfied. Then the pair $(x, s)$ satisfies the requirements of Proposition \ref{lem:bgne} and the superregularity hypothesis of \cite{Mi} and \cite{Vimi}.

By the description in (\ref{eq:ne}) of the set of classes whose Newton strata are not equicodimensional, the next step is to find the generic Newton points associated to $sx$ and $sxs$. By \cite{Mi}*{Thm.~3.2} one obtains 
\begin{equation*}
    \nu_{sx} = (149, 75, 0, -75, -149), \quad
    \nu_{sxs} = (149, 74, 0, -74, -149), \quad
    \nu_{x} = \nu_{sx}
\end{equation*} 
where $\nu_x$, $\nu_{sx}$ and $\nu_{sxs}$ denote the generic Newton points of $IxI$, $IsxI$ and $IsxsI$, respectively. Observe that $\nu_x = \nu_{sx} > \nu_{sxs}$ in the dominance order, and  \begin{equation*}\label{eq.diff}
    \nu_{x} - \nu_{sxs} = \alpha_2^{\vee} + \alpha_3^{\vee}
\end{equation*}
where $\alpha_i^{\vee}$ denotes the coroot $e_i-e_{i+1}$. 

From $\nu_{sx} > \nu_{sxs}$ and Proposition \ref{lem:bgne} we obtain that $$B(G)_x=\big\{[b] \in B(G)\mid \nu_b\leq\nu_x=(149, 75, 0, -75, -149)\big\}.$$ Moreover, by (\ref{eq:ne}), the subset of $[b]\in B(G)_x$ whose Newton stratum  is not equicodimensional in $IxI$ is exactly the set $\{[b] \in B(G) \mid [b] \le [b_{sxs}]\}$. In particular, $\mathcal N_{[b_{sxs}],x}\subset IxI$ in non-equicodimensional and is maximal with this property.

There are two maximal chains in $B(G)_x$ from $[b_{sxs}]$ to $[b_x]$. They are given by $ [b_{sxs}] < [b_i] < [b_x]$ where $[b_i]$ (for $i = 2, 3$) is the class with Newton point $\nu_i=\nu_x-\alpha_i^{\vee}$. 

Let $n\in \mathbb N$ be such that all Newton strata in $IxI$ are invariant under $I_n$. Let $Z$ an irreducible component of codimension $> 1$ of $\N_{[b_{sxs}],x}/I_n$, which exists since this stratum is not equidimensional and has codimension $1$. Let $U_2$ be the open subscheme of $IxI/I_n$ where $\proj_{(2)}(\nu(z)) = \proj_{(2)}(\nu_x)$ (i.e., the sum of the first two entries of $\nu(z)$ is 224), and let $S_2$ be its complement. Then $S_2$ contains $Z$ and $\mathcal{N}_{[b_i],x}/I_n$ for $i = 3$. By the Purity Theorem, $S_2$ is pure of codimension $1$ in $IxI/I_n$. Up to considering an irreducible component containing $Z$, we can assume that $S_2$ is irreducible. Since $Z$ has codimension at least $1$ in $S_2$, its generic Newton class $[b_{sxs}]$ is then smaller than that of $S_2$. By definition of $S_2$, it follows that the generic class of $S_2$ can only be $[b_3]$, and therefore, $Z$ is contained in the closure of $\mathcal{N}_{b_3}$. With a similar argument, we find that any irreducible component of $\mathcal{N}_{[b_{sxs}], x}$ of codimension $>1$ is contained in the closure of $\mathcal{N}_{[b_2], x}$, as well.

The Newton stratum $\mathcal{N}_{[b_{sxs}], x}$ also has an irreducible component of codimension $1$, which is thus not contained in the closure of any other Newton stratum except for $\N_{[b_x],x}$. Hence the closures of $\mathcal{N}_{[b_i], x}$ for $i= 2,3$ each have a non-empty intersection with $\N_{[b_{sxs}], x}$ but do not contain it. 
\end{ex}

\section*{Appendix}
The following script can be run in \verb+SageMath+ to compute all pairs $(x,s)$ as in Proposition \ref{lem:bgne} for $G$ of type $A_4$. By changing the parameters in the first line, one can see, for example, that for $G$ of type $A_3$ no such pair exists. 

\begin{verbatim}

W = WeylGroup(['A', 4], prefix = 's')
S = W.simple_reflections()
qbg = W.quantum_bruhat_graph()
for v in W:
    for w in W:
        for s in S:
            if ( (s*v).length() < v.length() and 
            (w*s).length() > w.length() and 
            (w*s*v).length() < (w*v).length() -1 and
            (w*v).has_full_support() and
            (w*s*v).has_full_support() and 
            qbg.distance(w^-1, s*v) == (w*s*v).length() and
            qbg.distance(s^-1*w^-1, s*v) == (w*v).length() ) :
                print("(%s, %s, %s), sx and sxs are cordial" 
                % (v, w, s))
\end{verbatim}

For $G$ of type $A_4$ we obtain the following list of triples $(v,w,s)$ satisfying the assumptions of Proposition \ref{lem:bgne}.

\begin{longtable}{|L|L|L||L|L|L|} 
\hline
v & w & s & v & w & s \\
\hline
s_3s_2 & s_2s_3s_4s_1s_2 & s_3 & s_3s_4s_2 & s_2s_3s_4s_3s_1s_2 & s_3\\
\hline

s_3  s_4  s_2  s_1 & s_1  s_2  s_3  s_4  s_3  s_1  s_2 & s_3 & 
 s_2   s_3 &  s_3   s_4   s_2   s_3   s_1 &  s_2    \\ 
\hline
s_2   s_3   s_1 &  s_3   s_4   s_1   s_2   s_3   s_1 &  s_2 &
 s_2   s_3   s_4   s_1 &  s_3   s_4   s_1   s_2   s_3   s_2   s_1 &  s_2    \\ \hline
 s_4   s_2   s_3 &  s_3   s_4   s_2   s_3   s_1 &  s_2 & 
 s_4   s_2   s_3 &  s_2   s_3   s_4   s_2   s_3   s_1 &  s_2   \\ \hline
 s_4   s_2   s_3 &  s_4   s_2   s_3   s_1 &  s_2 & 
 s_2   s_3   s_4   s_3 &  s_2   s_3   s_1 &  s_2 \\ \hline
s_4   s_2   s_3   s_1 &  s_1   s_2   s_3   s_4   s_2   s_3   s_1 &  s_2 &
 s_2   s_3   s_4   s_3   s_1 &  s_1   s_2   s_3   s_1 &  s_2   \\ \hline
s_2   s_3   s_4   s_3   s_1 &  s_3   s_4   s_1   s_2   s_3   s_2   s_1 &  s_2 &
 s_2   s_3   s_4   s_3   s_1 &  s_1   s_2   s_3   s_4   s_2   s_3   s_2   s_1 &  s_2   \\ \hline
s_4   s_2   s_3   s_1   s_2   s_1 &  s_3   s_4   s_2   s_3   s_1 &  s_2 &
 s_4   s_2   s_3   s_1   s_2   s_1 &  s_4   s_2   s_3 &  s_4   \\ \hline
 s_2   s_3   s_4   s_3   s_1   s_2 &  s_1   s_2   s_3 &  s_4 &
 s_2   s_3   s_4   s_3   s_1   s_2 &  s_3   s_4   s_1   s_2   s_3   s_2   s_1 &  s_2  \\ \hline
  s_2   s_3   s_4   s_3   s_1   s_2   s_1 &  s_2   s_3 &  s_4   &
  s_2   s_3   s_4   s_3   s_1   s_2   s_1 &  s_3   s_4   s_2   s_3   s_2   s_1 &  s_2     \\ \hline
  s_3   s_1   s_2 &  s_2   s_3   s_4   s_1   s_2 &  s_3   &
  s_3   s_1   s_2 &  s_2   s_3   s_4   s_1   s_2   s_1 &  s_3     \\ \hline
  s_3   s_1   s_2 &  s_3   s_4   s_1   s_2 &  s_3   &
  s_3   s_1   s_2   s_1 &  s_3   s_4   s_2 &  s_3     \\ \hline
  s_3   s_4   s_1   s_2 &  s_2   s_3   s_4   s_3   s_1   s_2   s_1 &  s_3   &
  s_3   s_4   s_1   s_2   s_1 &  s_3   s_4   s_3   s_2 &  s_3     \\ \hline
  s_3   s_4   s_1   s_2   s_1 &  s_1   s_2   s_3   s_4   s_3   s_1   s_2 &  s_3   &
  s_3   s_4   s_1   s_2   s_1 &  s_1   s_2   s_3   s_4   s_3   s_1   s_2   s_1 &  s_3     \\ \hline
  s_3   s_4   s_1   s_2   s_3   s_1 &  s_4   s_3   s_2 &  s_1   &
  s_3   s_4   s_1   s_2   s_3   s_1 &  s_1   s_2   s_3   s_4   s_3   s_1   s_2 &  s_3     \\ \hline
  s_1   s_2   s_3   s_4   s_2   s_3 &  s_2   s_3   s_4   s_1   s_2 &  s_3   &
  s_1   s_2   s_3   s_4   s_2   s_3 &  s_3   s_1   s_2 &  s_1     \\ \hline
  s_1   s_2   s_3   s_4   s_2   s_3   s_1 &  s_3   s_2 &  s_1   &
  s_1   s_2   s_3   s_4   s_2   s_3   s_1 &  s_1   s_2   s_3   s_4   s_1   s_2 &  s_3     \\ \hline
  s_3   s_4   s_1   s_2   s_3   s_2 &  s_4   s_1   s_2   s_3   s_2 &  s_1   &
  s_3   s_4   s_1   s_2   s_3   s_2   s_1 &  s_4   s_3   s_2 &  s_1     \\ \hline
  s_3   s_4   s_1   s_2   s_3   s_2   s_1 &  s_3   s_4   s_3   s_2 &  s_1   &
  s_3   s_4   s_1   s_2   s_3   s_2   s_1 &  s_4   s_2   s_3   s_2 &  s_1     \\ \hline
  s_4   s_1   s_2   s_3   s_2 &  s_4   s_3   s_1   s_2 &  s_1   &
  s_4   s_1   s_2   s_3   s_2 &  s_4   s_1   s_2   s_3 &  s_4     \\ \hline
  s_4   s_1   s_2   s_3   s_2   s_1 &  s_4   s_3   s_2 &  s_1   &
  s_4   s_1   s_2   s_3   s_2   s_1 &  s_4   s_2   s_3 &  s_4     \\ \hline
  s_1   s_2   s_3   s_4   s_3   s_2 &  s_3   s_1   s_2 &  s_1   &
  s_1   s_2   s_3   s_4   s_3   s_2 &  s_1   s_2   s_3 &  s_4     \\ \hline
  s_1   s_2   s_3   s_4   s_2   s_3   s_2 &  s_3   s_1   s_2 &  s_1   &
  s_1   s_2   s_3   s_4   s_2   s_3   s_2 &  s_3   s_4   s_1   s_2 &  s_1     \\ \hline
  s_1   s_2   s_3   s_4   s_2   s_3   s_2 &  s_1   s_2   s_3   s_2 &  s_1   &
  s_1   s_2   s_3   s_4   s_2   s_3   s_2   s_1 &  s_3   s_4   s_2 &  s_1     \\ \hline
  s_4   s_1   s_2   s_3   s_1   s_2 &  s_4   s_1   s_2   s_3   s_2 &  s_4   &
  s_4   s_1   s_2   s_3   s_1   s_2   s_1 &  s_4   s_2   s_3 &  s_4     \\ \hline
  s_4   s_1   s_2   s_3   s_1   s_2   s_1 &  s_4   s_2   s_3   s_2 &  s_4   &
  s_4   s_1   s_2   s_3   s_1   s_2   s_1 &  s_4   s_2   s_3   s_1 &  s_4     \\ \hline
  s_1   s_2   s_3   s_4   s_3   s_1   s_2 &  s_1   s_2   s_3 &  s_4   &
  s_1   s_2   s_3   s_4   s_3   s_1   s_2 &  s_1   s_2   s_3   s_2 &  s_4     \\ \hline
  s_1   s_2   s_3   s_4   s_3   s_1   s_2 &  s_1   s_2   s_3   s_1 &  s_4   &
  s_1   s_2   s_3   s_4   s_3   s_1   s_2   s_1 &  s_2   s_3   s_1 &  s_4     \\ \hline

\end{longtable}

\begin{bibdiv}[GHN]
  \begin{biblist}*{labels={shortalphabetic}}
  
  \bib{B}{article}{
 author = {Beazley, Elizabeth T.} ,
 journal = {Math. Zeitschrift},
 number = {3},
 pages = {499--540},
 title = {Codimensions of Newton strata for SL(3) in the Iwahori case},
 volume = {263},
 year = {2009},
 label = {B},
 }

 \bib{chai}{article}{
 author = {Chai, Ching-Li} ,
 journal = {American Journal of Mathematics},
 number = {5},
 pages = {967--990},
 title = {Newton Polygons as Lattice Points},
 volume = {122},
 year = {2000},
 label = {C},
 }

 \bib{gor}{article}{
 author={G\"ortz, Ulrich},
  author={He, Xuhua},
 title={Dimension of affine Deligne-Lusztig varieties in affine flag varieties},
 journal= {Documenta Mathematica},
  year={2010},
  volume = {15},
  pages = {1009--1028}
 }
 
 \bib{GHN}{article}{
 author={G\"ortz, Ulrich},
  author={He, Xuhua},
  author={Nie, Sian}
 title={P-alcoves and nonemptiness of affine Deligne-Lusztig varieties},
 journal= {Ann. Sci. \'Ecole Norm. Sup.},
  year={2015},
  volume = {48},
  pages = {647--665}
 }

 \bib{ham}{article}{
 author={Hamacher, Paul},
 title={The almost product structure of Newton strata in the Deformation space of a Barsotti-Tate group with crystalline Tate tensors},
  journal={Mathematische Zeitschrift},
  volume = {287},
  pages={1255--1277},
  year={2017},
  label = {Ha}
 }

 \bib{he}{article}{
 author={He, Xuhua},
 title={Geometric and homological properties of affine Deligne-Lusztig varieties},
  journal={Annals of Mathematics},
  volume = {179},
     number = {1},
  pages = {367--404},
  year = {2014},
  label = {He1}
 }
 
 \bib{he2}{article}{
 author={He, Xuhua},
 title={Hecke algebras and $p$-adic groups},
  journal={Current developments in mathematics},
  pages = {73--135},
  year = {2016},
  publisher={International Press},
  address = {Somerville, MA},
  label = {He2}
 }
 
 \bib{HeNote}{article}{
 author={He, Xuhua},
 title={Note  on  affine  Deligne-Lusztig  varieties},
  journal={Proceedings  of  the  Sixth  International Congress of Chinese Mathematicians.  Vol.  I, Adv.  Lect.  Math.  (ALM) },
  volume = {36}
  pages = {297--307},
  year = {2017},
  publisher={International Press},
  address = {Somerville, MA},
  label = {He3}
 }
 
 \bib{He20}{article}{
 author={He, Xuhua},
 title={Cordial elements and dimensions of affine Deligne-Lusztig varieties},
  journal={preprint, arxiv:2001.03325},
  label = {He4}
 }

 \bib{Ko}{article}{
 author={Kottwitz, Robert E.},
 title={Isocrystals with additional structure},
  journal={Compositio Mathematica},
  volume = {56},
     number = {2},
  pages={201--220},
  year={1985},
  label = {Ko1}
 }

 \bib{Ko''}{article}{
 author={Kottwitz, Robert E.},
 title={Isocrystals with additional structure. II},
  journal={Compositio Mathematica},
  volume = {109},
  pages={255--339},
  year={1997},
  label = {Ko2}
 }
 
 \bib{kot6}{article}{
 author={Kottwitz, Robert E.},
 title={Dimensions of Newton strata in the adjoint quotient of reductive groups},
  year={2006},
  journal = {Pure and Applied Mathematics Quarterly},
  volume = {2},
  number = {3},
  pages = {817--836},
    label= {Ko3}
 }

\bib{Mi}{article}{
 author={Milićević, Elizabeth},
 title={Maximal Newton Points and the Quantum Bruhat Graph},
  journal={Michigan Mathematical Journal},
  pages = {forthcoming},
  year={2021},
  label = {M}
 }
 
\bib{Vimi}{article}{
 author={Milićević, Elizabeth},
    author={Viehmann, Eva},
 title={Generic Newton points and the Newton poset in Iwahori-double cosets},
  journal={Forum of Mathematics, Sigma},
  volume = {8},
  pages = {to appear},
  year = {2020}
 }

\bib{rap}{article}{
 author={Rapoport, Michael},
 title={A guide to the reduction modulo $p$ of Shimura Varieties},
 journal = {Ast\'erisque},
 number = {298},
    year = {2005},
    pages = {271--318},
 series = {Automorphic forms I},
 label = {R}
 }

\bib{RR}{article}{
 author={Rapoport, Michael},
    author={Richartz, Melanie},
 title={On the classification and specialization of $F$-isocrystals with additional structure},
  journal={Compositio Mathematica},
  volume = {103},
     number = {2},
  pages = {153--181},
  year = {1996}
 }
 
\bib{V13}{article}{
 author={Viehmann, Eva},
 title={Newton strata in the loop group of a reductive group},
 journal = {American Journal of Mathematics},
 number = {135},
    year = {2013},
    pages = {499--518},
 }

  \end{biblist}
\end{bibdiv}
\end{document}